\newcommand{\mat}[3]{#1\!\equiv\!#2\;\textrm{mod}~#3}
\newcommand{\co}[1]{\left[ #1 \right]}
\newcommand{\pa}[1]{\left( #1 \right)}
\newcommand{\N}{\mathbb{N}}
\newcommand{\F}{\mathcal{F}}
\newcommand{\K}{\alpha}
\newcommand{\Vv}{\vartheta}
\newcommand{\Td}{\sim}
\newcommand{\mm}{\cdot}
\newcommand{\Rt}{\Rightarrow}
\newcommand{\re}{\ldots}
\newtheorem{de}{Definition}[section]
\newtheorem{teo}{Theorem}[section]
\newtheorem{lem}{Lemma}[section]
\newtheorem{prop}{Proposition}[section]
\newtheorem{coro}{Corolary}[section]
\newtheorem{ex}{Example}[section]
\begin{document}
\title{The Conjecture of Collatz 
\thanks{Mathematics Subject Classification 
Primary [$16U60$, $20E05$]; Secondary [$16S34$, $20M05$].
\newline
Keywords: 3n+1, $S-orbits$, Collatz number
\newline Research supported by FAPESP(Funda\c c\~ao  de Amparo \`a 
Pesquisa do Estado de S\~ao Paulo), Proc. 2014/06325-1} }
\author{Costa, G. H. S \and Souza Filho, A.C.}
\date{}
\maketitle
\section{Introduction}

The conjecture we will discuss is generally credited to Lothar Collatz. According to \cite{lagov}, this problem was firstly described in a lecture presented by Collatz in the International Congress of Mathematics, Massachusetts, $1950$.

In what follows, our notation is $\N = \{ 1, 2, \re \}$ and $\N \cup \{ 0 \} = \N_0$, where $\N$ is the set of natural numbers.

The conjecture is then formulated below:

Let $f: \N \longrightarrow \N$ be a map, defined by
$$f(x)= \left\{ \begin{array}{rll} \frac{3x + 1}{2} & \hbox{If} & \mat{x}{1}{2}  \\
\frac{x}{2} ~~~~ & \hbox{if} & \mat{x}{0}{2} \\
\end{array}\right.$$ 
If we denote, for $k \in \N$, $f^k$ the composition of the function $f$ $k$ times. Prove that $\exists \lambda \in \N$, $\forall n \in \N$,  such , $f^{\lambda}(n)=1$. \\

For a given natural number $n$ we can, by composing the function decide if the there exists such exponent $\lambda$. So, $f(7)=\frac{3 \mm 7 + 1}{2} = 11 \Rt f^2(7) = f(11) = \frac{3 \mm 11}{2} = 17 \Rt f^3(7) = f(17) = \frac{3 \mm 17 +1}{2} = 26 \Rt f^4(7) = f(26) = \frac{26}{2} = 13 \Rt f^5(7) = f(13)= \frac{3 \mm 13 +1}{2} = 20$, note that $20=5 \mm 2^2$, thus $f^{5+2}(7) = f^2(5 \mm 2^2)=\frac{5 \mm 2^2}{2^2}=5$. Also, it is clearly that $f^4(5)=1$ and then $f^{11}(7)=1$. Nevertheless, we should be careful, since $f^{70}(27)=1$.

It is a simple routine to prove that, for $k,n \in \N$, $f^k(2^k n)=n$.

\begin{prop} \label{co11}
Let $\{ k, p, n \} \subset \N$, 

\begin{description}
   \item[(i)] $f^k(2^k)=1$;
   \item[(ii)] $f^{k+p}(2^p n)=f^k(n)$.
 \end{description}

\end{prop}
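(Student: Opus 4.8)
The plan is to reduce both parts to the elementary identity $f^{k}(2^{k}n)=n$ (for all $k,n\in\N$) noted just before the statement; I would either cite it directly or, for completeness, reprove it by a one-line induction on $k$. For the induction: the case $k=0$ is vacuous, and for $k\ge 1$ we have $\mat{2^{k}n}{0}{2}$, so $f(2^{k}n)=2^{k-1}n$, whence $f^{k}(2^{k}n)=f^{k-1}\!\left(2^{k-1}n\right)=n$ by the inductive hypothesis.

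For part \textbf{(i)}, I would instantiate this identity at $n=1$: since $2^{k}\mm 1=2^{k}$, it gives $f^{k}(2^{k})=1$.

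For part \textbf{(ii)}, I would use the semigroup law for iterates of a single self-map, $f^{a+b}=f^{a}\circ f^{b}$ (immediate from the definition of iterated composition), to write
$$f^{k+p}(2^{p}n)=f^{k}\!\left(f^{p}(2^{p}n)\right)=f^{k}(n),$$
the last step being the identity above applied with exponent $p$ — the inner $p$ iterations peel off exactly the factor $2^{p}$, since each intermediate value $2^{p-j}n$ for $0\le j<p$ is even and hence handled by the second branch of $f$.

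I do not expect a genuine obstacle: the entire content is that halving $p$ times inverts multiplication by $2^{p}$, plus bookkeeping on the iteration index. The only point needing a moment's care is aligning the number of iterations with the power of $2$ so that each of those iterations really does fall on the even branch; once that is verified, (i) and (ii) are immediate.
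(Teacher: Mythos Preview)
Your proposal is correct and matches the paper's intent: the paper does not supply a formal proof of this proposition at all, but immediately before it records the identity $f^{k}(2^{k}n)=n$ as ``a simple routine,'' from which (i) and (ii) are meant to follow exactly as you describe. Your inclusion of the one-line induction and the explicit use of $f^{k+p}=f^{k}\circ f^{p}$ simply makes that implicit reasoning precise.
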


It is a temptation to try techniques of induction  on Collatz conjecture. Nevertheless, if we try it, we see that difficult rises in the induction step. We suppose the conjecture is true when $k\leq n-1$, and look forward to prove it for $k=n$. If $n$ is even, then we are done, since $f(n)=\frac{n}{2}$.
But, be $n$ odd, then $f(n)=\frac{3n+1}{2}=n+\frac{n+1}{2}>n$. After some elapse of time, we try to do it, not module $2$, but module $4$. Now, we see that the odd $n$ such that $n \equiv 1 \pmod 4$, works not for $f(n)$, but for $f^2(n)=\frac{3n+1}{4}$, since $4 \mid 3n+1$, since $ \frac{3n+1}{4}=n+\frac{1-n}{4}<n$, $n \neq 1$, then also we are done. But, there is no way to do it when $n \equiv 3 \pmod 4$.

We will see, next that the condition $n \equiv 3 \pmod 4$, on propose of this work, is the main difficult.

 The paper \cite{evt} is very close to this fact and proves that, if $M$ is a natural number, $\lim_{M \rightarrow  \infty}\frac{A(M)}{M}=1$, where $A(M)$ is the number of natural numbers $m<M$, which for some $\lambda$,  $f^{\lambda}(m)<m$.

In this work, we choose an algebraic approach. Until now, the main gain we have is with the language and notation. But ahead, in the next section, we defined an equivalence relation which seems elucidate the problem. In the final section, we present an algebraic structure to the numbers which satisfies the conjecture, called here Collatz numbers. Then we are able to show infinite sets of Collatz numbers.   

\section{The $S$-Compositions}

If $n$ is a Collatz number which the power $k$, $f^k(n)=1$, then $f^{k+1}(n)=2$ and $f^{k+2}(n)=1$. Clearly, $f^{\lambda + 2i}(n) = 1$,  $i \in \N$.
Thus, when $n$ is a Collatz number we define the natural $\lambda$, the Collatz length associated to $n$, as the minimum power of $f$ such that $f^\lambda(n)=1$.

\begin{de} \label{de12}
Let $\F_f(\N,\N)$ be the family of functions in the natural numbers, such that if $\Phi_n \in \F_f(\N,\N)$ and $\lambda \in \N$, then $\Phi_n(\lambda)=f^\lambda(n) \in \N$. We define $C:\N \longrightarrow \F_f(\N,\N)$, such that, $C(n)=\Phi_n \in \F_f(\N,\N)$ and  $\Phi_n(\lambda)=f^{\lambda}(n)$. We say that $C$ is the  Collatz function if, for all natural number $n$, there exists $\lambda \in \N$, and  $\Phi_n(\lambda)=f^{\lambda}(n)=1$. The least $\lambda$ is the Collatz length of $n$, it is denoted by $\Vv(n)$.
\end{de}

 As defined above, the Collatz conjecture claims that $f$ is a Collatz function.

We can calculate the natural $n \in \N$, where $\Vv (n)=2$. \\

$\begin{array}{l | l}  \textrm{either $\mat{n}{0}{2} \Rt f(n) = \frac{n}{2}$ and} & \textrm{or $\mat{n}{1}{2} \Rt f(n) = \frac{3n + 1}{2}$ and} \\
\hline

 \begin{array}{l | l}  \textrm{either $\mat{\frac{n}{2}}{0}{2} $ }  & \textrm{or $ \mat{\frac{n}{2}}{1}{2}$}\\
  
	\textrm{then $f^{2}(n)=\frac{n}{4}$}  & \textrm{then $f^{2}(n) =$}\\
  \textrm{}  & \textrm{$=\frac{3(\frac{n}{2}) + 1}{2} = \frac{3n + 2}{4}$}\\
  \textrm{}  & \textrm{}\\
	\textrm{$\therefore \frac{n}{4} = 1 \Rt n = 4 $}  & \textrm{$\therefore \frac{3n + 2}{4} = 1 \Rt n \notin \N$}\\
  
 \end{array} & \begin{array}{l | l}
 
   \textrm{either $\mat{\frac{3n + 1}{2}}{0}{2}$,}  & \textrm{or $\mat{\frac{3n + 1}{2}}{1}{2}$,}\\  
    \textrm{then $ f^{2}(n)=\frac{3n + 1}{4}$}  & \textrm{then $ f^{2}(n) =$}\\
    \textrm{}  & \textrm{$ = \frac{3 (\frac{3n + 1}{2}) + 1}{2} = \frac{9n + 5}{4}$}\\
		\textrm{}  & \textrm{} \\
		\textrm{$\therefore \frac{3n + 1}{4} = 1 \Rt n = 1$}  & \textrm{$ \therefore \frac{9n + 5}{4} = 1 \Rt n \notin \N$}\\
    \end{array}
\end{array}$\\

Thus, $\Vv (n)=2$, $n \in \{ 1, 4 \}$.

In \cite{lagov}, it is observed the Collatz was interested in graphical representations
of iteration of functions. Next we define a semigroup which contains all powers of the function $f$. 

\begin{de} \label{de21}
Let the relations:

$$0: \N \longrightarrow \N ~~~~~~~~~~~~~~~~~~1: \N \longrightarrow \N$$
$$~~~~~~~~~~~ x \longmapsto \frac{x}{2} ~~~~~~~~~~~~~~~~~~~~~~ x \longmapsto \frac{3x + 1}{2},$$

denoted by $s_i$, where $i \in \N$. A composition  of a number $k \in \N$ of these relations is called $S$-composition which we denote by $s_{k} \circ s_{k-1} \circ \re \circ s_1 $. We define $S$ the set of all $S$-composition of $f$.
\end{de}
 
Also, we have the notations:

\begin{itemize}

\item $s_{k} \circ s_{k-1} \circ \re \circ s_1 (x) = s_k s_{k-1} \re  s_1 (x)$, so compositions of $f$ is a concatenation of the functions $0$ and $1$. For instance, let $s \in S$, defined by $s=1 \circ 0 \circ 0 \circ 1 \circ 1= 10011$. Since $10011(19)=17$, then $f^5(19)=17$ and $10011$ is one of the $32$ possible powers of $f^5$.

\item  Let $s \in S$, we denote $\underbrace{s \circ s \circ \re \circ s}_{ k-times}=s^k$.

Clearly, $f^k=u^{k_1}v^{m_1} \cdots u^{k_l}v^{m_l}$, and $u,v  \in \{0,1\}$, such that $u \neq v$. For instance, above, we saw that the collatz length of $27$ is $70$. In fact, 
$$0^310^41^30^210^3101010^31^40^21^601^201^30^2101^401^301^20101^501^2(27)=1$$

\item If $s^{-1}$ is the inverse relation of $s$, then we denote  $s_{1}^{-1} \circ s_{2}^{-1} \circ \re \circ s_k^{-1}(y)=(s_k s_{k-1} \re s_1)^{-1}(y)= x$ and $\underbrace{s^{-1} \circ s^{-1} \circ \re \circ s^{-1}}_{ k-times}=s^{-k}$. Hence, $10011(19)=17$ and $s^{-1}(17)=19$.

\end{itemize}

\begin{de}
Let  $S$ be the set of all $S$-compositions of $f$. For all $n \in \N$, we define the set $A_n = \{s \in S~|~s(n) \in \N\}$. If  $k \in \N$, $s \in S$ and $s_\lambda$ is the last index of $s$, we say $\lambda$ the length of $s$, denoted by $l(s) = \lambda$. Hence, we define $ \langle f^{\lambda} \rangle =\{ s \in S ~|~ l(s)=\lambda \}$.
\end{de}

\begin{prop}
Let $n$, $\lambda \in \N$, $\vartheta(n)=\lambda$. Then, there exists a unique $s \in  \langle f^{\lambda} \rangle$, such that $s$ is the less $S$-composition of $A_n$ where $s(n)=1$.
\end{prop}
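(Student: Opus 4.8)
The plan is to exploit the fact that the set $A_n$ is far more rigid than it looks: a word over $\{0,1\}$ is \emph{defined} at $n$ only if, at every step, the symbol applied matches the parity of the running value, so the trajectory of $n$ under $f$ already forces the whole word. Concretely, I would first record the following remark about the relations of Definition \ref{de21}, regarded as partial functions on $\N$: the relation $0$ is defined exactly at the even numbers, where it equals $x \mapsto x/2$, and the relation $1$ is defined exactly at the odd numbers, where $(3x+1)/2 \in \N$ and it equals the first branch of $f$. Hence for every $m \in \N$ there is a \emph{unique} symbol $\epsilon_m \in \{0,1\}$ defined at $m$, and $\epsilon_m(m) = f(m)$.

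Next I would prove, by induction on $\lambda$, the refinement that for every $\lambda \in \N$ there is a unique $s \in \langle f^{\lambda} \rangle$ with $s \in A_n$, and moreover $s(n) = f^{\lambda}(n)$. The base case $\lambda = 1$ is exactly the remark above with $m = n$. For the step, write a length-$(\lambda+1)$ word as $s_{\lambda+1} \circ s$ with $l(s) = \lambda$; for it to belong to $A_n$ we first need $s \in A_n$, so by the inductive hypothesis $s$ is the unique such word and $s(n) = f^{\lambda}(n)$, and then $s_{\lambda+1}$ must be the unique symbol $\epsilon_{f^{\lambda}(n)}$ defined at $f^{\lambda}(n)$, giving $(s_{\lambda+1} \circ s)(n) = f(f^{\lambda}(n)) = f^{\lambda+1}(n)$. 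This yields both the uniqueness and the stated value.

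With this in hand the proposition falls out. Existence: since $\vartheta(n) = \lambda$ we have $f^{\lambda}(n) = 1$, so the unique $s \in \langle f^{\lambda} \rangle \cap A_n$ from the induction satisfies $s(n) = 1$. Minimality and uniqueness: let $s' \in A_n$ be any $S$-composition with $s'(n) = 1$ and set $\mu = l(s')$; the induction gives $s'(n) = f^{\mu}(n)$, hence $f^{\mu}(n) = 1$, so $\mu \geq \lambda$ by the very definition of $\vartheta(n)$ as the least such exponent. If $\mu = \lambda$ then $s' = s$ by uniqueness, and if $\mu > \lambda$ then $s'$ is strictly longer; either way $s$ is the shortest element of $\{ s' \in A_n \mid s'(n) = 1 \}$ and the only one of that length.

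I expect the one point requiring care to be presentational rather than mathematical: one must fix, at the outset, that $0$ and $1$ are partial functions whose domains are prescribed by parity, so that ``$s$ is defined at $n$'' leaves no freedom; after that the argument is a short bookkeeping induction. A secondary point is to settle what ``the less $S$-composition'' means --- I read it as ``of least length'', which is consistent with the statement isolating $\langle f^{\lambda} \rangle$ --- and to note in passing that longer solutions genuinely exist, namely the words of length $\lambda + 2i$ produced by the tail $1 \to 2 \to 1 \to \re$.
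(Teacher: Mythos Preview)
Your argument is correct. In the main body the paper states this proposition without proof, so there is nothing to compare against directly; the draft material appended after the bibliography does contain a closely related statement (that any $s\in A_n$ of length $k$ satisfies $s(n)=f^k(n)$) proved by exactly the same parity-forced induction you give, so your approach is the intended one.
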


\begin{prop}
The set $A_n = \{s \in S~|~s(n) \in \N\}$ is uniquely determined by the $S$-composition $s \in S$, such that $s(n)=1$.
\end{prop}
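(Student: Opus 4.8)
The plan is to prove a sharper, structural fact from which the Proposition is immediate: for every $n\in\N$ the set $A_n$ is precisely the set of all finite initial segments, taken in order of application, of a single infinite word $w_n$ over $\{0,1\}$, and $w_n$ itself is recovered from any $s\in S$ with $s(n)=1$ simply by appending the periodic tail $101010\cdots$.

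First I would record the parity dichotomy that drives everything. For $m\in\N$, the relation $0$ sends $m$ to $m/2\in\N$ exactly when $m$ is even, while $1$ sends $m$ to $\frac{3m+1}{2}\in\N$ exactly when $3m+1$ is even, i.e. when $m$ is odd. Thus, viewed as partial maps on $\N$, the relations $0$ and $1$ have disjoint domains whose union is all of $\N$: at every natural number exactly one of the two letters may be applied without leaving $\N$. Iterating from $n$, there is therefore a unique infinite word $w_n=w_1w_2w_3\cdots\in\{0,1\}^{\N}$ (letters listed in order of application) such that for each $k$ the $S$-composition $w_k\circ\cdots\circ w_1$ applied to $n$ stays in $\N$; explicitly $w_{k+1}=0$ when $f^k(n)$ is even and $w_{k+1}=1$ when $f^k(n)$ is odd, with $f^0(n)=n$. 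Now an $S$-composition $s_\lambda\circ\cdots\circ s_1$ lies in $A_n$ iff $s_1(n),\,s_2(s_1(n)),\dots$ are all in $\N$ (the relations are partial functions, so the composite is defined in $\N$ only if every intermediate stage is), and since the admissible letter is forced at each stage this forces $s_i=w_i$ for $i=1,\dots,\lambda$; conversely every such initial segment of $w_n$ belongs to $A_n$ by construction. Hence $A_n$ is exactly the set of finite initial segments of $w_n$ — in particular $A_n$ is linearly ordered by the initial-segment relation — and $A_n$ is completely determined by the single word $w_n$.

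Second, I would pin down $w_n$ from the datum ``$s\in S$ and $s(n)=1$''. If $s$ has length $\lambda$, then by the previous step $s$ is the length-$\lambda$ initial segment of $w_n$ and $f^\lambda(n)=1$. From the value $1$ the forced letters are $1$ (since $1$ is odd and $1(1)=2$), then $0$ (since $2$ is even and $0(2)=1$), then $1$, then $0$, and so on; that is, once the orbit of $n$ reaches $1$ the tail of $w_n$ is the periodic word $\overline{10}$. Therefore $w_n$ equals $s$ followed by $101010\cdots$, so $w_n$ — and with it, by the first step, the whole of $A_n$ — is uniquely determined by $s$. This presupposes that some $s$ with $s(n)=1$ exists, i.e. that $n$ is a Collatz number, which is the standing hypothesis; the uniqueness of the distinguished such $s$, namely the one of minimal length $\vartheta(n)$, is exactly the content of the preceding Proposition.

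The only delicate points are bookkeeping. One must respect the orientation of $S$-compositions — in $s_k\circ\cdots\circ s_1$ the relation $s_1$ acts first, so an ``initial segment in order of application'' is a suffix of the written concatenation — and one should state explicitly what the conclusion asserts, namely that $A_n=\{\,t\in S : t \text{ is an initial segment of } s\,101010\cdots\,\}$, a description in which $n$ enters only through $s$. I do not anticipate any genuine number-theoretic obstacle: the argument rests entirely on the determinism supplied by the parity dichotomy together with the identification of $1\to 2\to 1$ as the eventual cycle, both of which are elementary.
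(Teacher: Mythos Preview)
Your argument is correct, and in fact the paper does not supply a proof of this proposition at all: it is stated and then immediately followed by the definition of $\operatorname{supp}(s)$, with the justification left implicit. Your proof fills that gap with exactly the mechanism the paper has in mind --- the parity dichotomy (at each $m\in\N$ precisely one of the letters $0,1$ is admissible) forces $A_n$ to be the chain of initial segments of a single infinite word $w_n$, and once the orbit hits $1$ the tail of $w_n$ is the forced periodic word $(10)^\infty$, so any $s$ with $s(n)=1$ determines $w_n$ and hence $A_n$. The orientation caveat you raise (initial segment in order of application $=$ suffix of the written string) is the only point needing care, and you have handled it.
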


\begin{de}
Let $s \in S$, where $s=s_{k} \circ s_{k-1} \circ \re \circ s_1$. We define $$supp(s)=\sum _{i=1} ^{k} s_i,$$ the support of $s$.
\end{de}

\begin{teo}
Let $S$ be the set of all $S$-compositions of $f$. Then $$S=\bigcup _{k \in \N}  \langle f^k \rangle.$$
\end{teo}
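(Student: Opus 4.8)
The plan is to prove the equality by a double inclusion, each direction of which simply unwinds the definitions of $S$ and of $\langle f^k \rangle$; the only point that deserves genuine care is that the length $l(s)$ is an honest invariant of an $S$-composition, so that the union is in fact a disjoint one and $S$ is graded by $\N$.

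First I would dispatch the easy inclusion $\bigcup_{k \in \N} \langle f^k \rangle \subseteq S$. By definition $\langle f^k \rangle = \{ s \in S \mid l(s) = k \}$, so each $\langle f^k \rangle$ is literally a subset of $S$, whence so is their union. For the reverse inclusion $S \subseteq \bigcup_{k \in \N} \langle f^k \rangle$, take an arbitrary $s \in S$. By Definition \ref{de21}, $s$ is an $S$-composition, i.e.\ $s = s_k \circ s_{k-1} \circ \re \circ s_1$ for some $k \in \N$ with each $s_i \in \{0,1\}$. The last index occurring is $k$, so $l(s) = k$ and therefore $s \in \langle f^k \rangle \subseteq \bigcup_{k \in \N} \langle f^k \rangle$. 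Since $S$ is by definition precisely the set of all such finite compositions and every such composition has a length $k \in \N$, this gives $S \subseteq \bigcup_{k \in \N}\langle f^k\rangle$, and combining the two inclusions yields $S = \bigcup_{k \in \N} \langle f^k \rangle$.

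It is worth adding that the decomposition is genuinely a grading, i.e.\ $\langle f^k \rangle \cap \langle f^m \rangle = \emptyset$ for $k \neq m$, so that $S = \bigsqcup_{k \in \N}\langle f^k \rangle$. This reduces to checking that $l$ is well defined on $S$: an $S$-composition is a finite concatenation of the symbols $0$ and $1$, and $l(s)$ is simply the number of symbols, which involves no choice since $S$ is the free object on $\{0,1\}$. The only conceivable obstacle is exactly this well-definedness: if one insists on regarding each $s_i$ as an actual relation on $\N$ and $S$ as a set of relations, one must verify that compositions of different lengths never define the same relation. This can be settled by evaluating on a suitable argument and reading off the length, for instance by tracking $2$-adic valuations along the lines of Proposition \ref{co11} (applying a block of $0$'s to a large power of $2$ recovers the number of $0$'s), which lets one reconstruct $k$ from the relation itself.
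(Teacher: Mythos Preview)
Your proof is correct and follows exactly the same approach as the paper's own proof: a double inclusion, each direction being immediate from the definitions of $S$ and $\langle f^k\rangle$. The paper's argument is essentially your first paragraph, stated more tersely; your additional remarks on disjointness and the well-definedness of $l$ go beyond what the paper proves (and beyond what the statement actually claims), but they are correct and do no harm.
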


Let $\langle f^k \rangle \subset S$ be the set of all $S$-composition of length $K$. Clearly, $2^k$ is a Collatz number, and $f^k=0^k \in \langle f^k \rangle \subset S$. It is natural to question if there exists $n \in \N \setminus \{2^k\}$ such that $n$ is a Collatz number associated to the $f^k \in \langle f^k \rangle$?

\begin{de}
For each $n \in \N$, we define the sequence $(a_k)_{k \in \N_0}$, where $a_0=n$ and $a_k=f^k(n), k \in \N$, by the $S$-orbit of $n$.
\end{de} 

Let $(a_k)_{k \in \N_0}$ be a $S-orbit$ of $n$, such that $a_l=a_q$, $l \neq q$. Suppose $1<l < q$.
 Then $f^l(n)=u^{k_i}v^{m_i} \cdots u^{k_1}v^{m_1}(n)=u^{x_j}v^{y_j} \cdots u^{x_1}v^{y_1}(n)=f^q(n)$, $i \leq j$ and $u,v \in \{0,1\}$.
Since $k_{p}=x_{p}$ and $m_{p}=y_{p}$, for all $p \leq i$, 
clearly $n_1=u^{k_i}v^{m_i} \cdots u^{k_1}v^{m_1}(n)=x^{k_j}y^{m_j} \cdots x^{k_{i+1}}y^{m_{i+1}}  x^{k_i}y^{m_i}(n) \cdots x^{k_1}y^{m_1}(n)=x^{k_j}y^{m_j} \cdots x^{k_{i+1}}y^{m_{i+1}}(n_1)$.
 Hence a $S-orbit$ such that $n_1=f^{q-l}(n_1)$, and $n_1=f^l(n)$ in the sequence $(a_k)_{k \in \N_0}$.

Thus, if $a_l=a_q$, $l \neq q$ then the $S-orbit$ of $n$ repeats one of its elements.

\begin{de}
Let $n \in \N$ be such that the $S-orbit$ $(a_k)_{k \in \N_0}$ has the property that $a_l=a_q$, for some $l<q$.
 Then, the $S-orbit$ of $n$ is cyclic. 
If $a_l \neq a_q$, for all $l,q \in \N$, then the $S-orbit$ of $n$ is non-cyclic.
\end{de}

\begin{prop}
If a $S-orbit$ of one natural $n$ is non-cyclic, then it has infinite elements.
\end{prop}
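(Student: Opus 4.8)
The plan is to argue by contradiction using the (infinite) pigeonhole principle. First I would record the only structural fact that is needed: since $f$ is a total map $\N \longrightarrow \N$, each iterate $f^k(n)$ again lies in $\N$, so the $S$-orbit $(a_k)_{k \in \N_0}$ is a genuine infinite \emph{sequence} of natural numbers, indexed by the infinite set $\N_0$. The assertion to be proved, ``the $S$-orbit has infinite elements'', I read as: the image set $\{\, a_k ~|~ k \in \N_0 \,\}$ is an infinite subset of $\N$. Thus the non-cyclic hypothesis says exactly that the map $k \longmapsto a_k$ is injective, and the conclusion says that its image is infinite.

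Next, suppose toward a contradiction that the image set $\{\, a_k ~|~ k \in \N_0 \,\}$ is finite, of cardinality $m$. Consider the first $m+1$ terms $a_0, a_1, \re, a_m$ of the orbit. These are $m+1$ natural numbers all lying in a set of size $m$, so by the pigeonhole principle there exist indices $l, q$ with $0 \leq l < q \leq m$ and $a_l = a_q$. By the definition of cyclic $S$-orbit, the existence of such a pair $l < q$ with $a_l = a_q$ means precisely that the $S$-orbit of $n$ is cyclic, contradicting the hypothesis that it is non-cyclic. Hence the image set cannot be finite, i.e. the $S$-orbit of $n$ has infinitely many distinct elements.

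I do not expect a genuine obstacle here; the statement is essentially a restatement of ``an injective sequence has infinite image''. The only point that requires a little care is the bookkeeping distinction between the sequence $(a_k)_{k \in \N_0}$, which is infinite as a sequence in every case, and its set of values, which is the object whose infinitude is claimed, together with the observation that non-cyclicity is by definition the negation of ``some value repeats''. An equivalent one-line phrasing would be: for a non-cyclic orbit the assignment $\N_0 \longrightarrow \N$, $k \longmapsto a_k$, is injective, so it identifies $\N_0$ with a subset of the set of elements of the orbit, which is therefore infinite.
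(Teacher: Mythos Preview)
Your argument is correct. The paper states this proposition without proof, treating it as evident from the definitions; your pigeonhole argument is exactly the standard justification one would supply. The only cosmetic point is that the paper's definition of non-cyclic quantifies over $l,q \in \N$ rather than $\N_0$, but since $a_0=n$ and $a_k=f^k(n)$, a repetition involving index $0$ forces a later repetition among positive indices as well (because $a_l=a_0$ with $l>0$ gives $a_{2l}=f^l(a_l)=f^l(a_0)=a_l$), so this makes no difference to your conclusion.
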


Clearly, if $n$ is a collatz number of length $\vartheta$, then the $S-orbit$ of $n$ has exactly $\vartheta+1$ distinct elements, althought it is infinite since $(n,f(n), \cdots, f^{\vartheta-1},1,2,1,2,1, \cdots)$ is the $S-orbit$ of $n$. Also, the $S-orbit$ of any Collatz number is cyclic.

\begin{teo}
Let $\Td:\N \longrightarrow \N$ be the relation define as follows: $n\Td m$ if $n, m$ are elements of the same $S-orbit$. Then $\Td$ is an equivalence relation. Futhermore $\Td$ has exactly one class $\overline{1}$, the class of Collatz number if, and only if, Collatz's conjecture is true.
\end{teo}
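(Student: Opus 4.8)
The plan is to verify the equivalence-relation axioms first and then characterize when the quotient collapses to a single class. For reflexivity, note that $n$ always lies in its own $S$-orbit $(a_k)_{k\in\N_0}$ since $a_0=n$; this is immediate from the definition of the $S$-orbit. For symmetry, suppose $n\Td m$, so that $n$ and $m$ both appear in some $S$-orbit $(a_k)$, say $n=a_i$ and $m=a_j$. The key observation is that the $S$-orbit of $m$ is the ``tail'' of the $S$-orbit of $n$ starting at index $j$, i.e. $f^t(m)=f^{t+j}(n)$ for all $t$; hence $m$ and $n$ lie in a common orbit (the orbit of $m$), giving $m\Td n$. Transitivity is the step that requires the most care: if $n\Td m$ via one orbit and $m\Td p$ via another, I would use the same tail-shift remark to realize all three numbers inside the $S$-orbit of whichever of $n,m,p$ occurs ``earliest,'' the point being that two $S$-orbits that share a common value coincide from that value onward (the forward dynamics of $f$ is deterministic). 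A clean way to package this is: define $n\Td m$ iff $f^a(n)=f^b(m)$ for some $a,b\in\N_0$; then reflexivity is $a=b=0$, symmetry is trivial, and transitivity follows because from $f^a(n)=f^b(m)$ and $f^c(m)=f^d(p)$ one gets $f^{a+c}(n)=f^{b+c}(m)=f^{b+d}(p)$ by applying $f^c$ to the first equation and $f^b$ to the second and using $f^{b+c}(m)=f^{c+b}(m)$. I should check that this reformulation agrees with ``elements of the same $S$-orbit,'' which it does because $m$ being in the $S$-orbit of $n$ means $m=f^k(n)$ for some $k$, i.e. $f^0(m)=f^k(n)$.

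For the ``furthermore'' clause, I argue both directions. Assume Collatz's conjecture holds. Then for every $n$ there is $\lambda$ with $f^\lambda(n)=1$, so $f^\lambda(n)=f^0(1)$, giving $n\Td 1$; hence every natural number lies in the class $\overline 1$, so there is exactly one class. Conversely, assume $\Td$ has exactly one class. Then in particular every $n$ satisfies $n\Td 1$, so there exist $a,b\in\N_0$ with $f^a(n)=f^b(1)$. Using Proposition~\ref{co11}(i) and the observation already recorded in the text that $f(1)=2$, $f(2)=1$, the forward orbit of $1$ is $(1,2,1,2,\dots)$, so $f^b(1)\in\{1,2\}$ for every $b$. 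Thus $f^a(n)\in\{1,2\}$, and then $f^{a+1}(n)$ or $f^{a+2}(n)$ equals $1$; in either case some iterate of $f$ sends $n$ to $1$, which is exactly the statement that $f$ is a Collatz function. This closes the equivalence.

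The main obstacle is making the transitivity argument airtight without circular reasoning about ``the same $S$-orbit.'' The phrase ``same $S$-orbit'' is slightly ambiguous because the $S$-orbit is indexed starting from a chosen base point $n$; the fix, as above, is to pass to the symmetric relation $f^a(n)=f^b(m)$ and prove it is transitive purely algebraically, then note it coincides with the intended relation. A secondary point to handle carefully is the degenerate behavior at $1$ and $2$: the forward orbit is eventually $2$-periodic rather than terminating, so ``$f^\lambda(n)=1$'' and ``$n\Td 1$'' must be reconciled using $f(2)=1$ and $f(1)=2$ — this is why the backward direction produces $f^a(n)\in\{1,2\}$ rather than $f^a(n)=1$ outright, and one extra application of $f$ finishes it. Everything else is bookkeeping.
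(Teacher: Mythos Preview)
The paper states this theorem without proof, so there is nothing to compare against directly; your plan supplies what the paper omits, and the core of it is sound. The reformulation $n\Td m \iff f^a(n)=f^b(m)$ for some $a,b\in\N_0$ is exactly the right object, your transitivity computation $f^{a+c}(n)=f^{b+c}(m)=f^{b+d}(p)$ is correct, and both directions of the ``furthermore'' clause are handled cleanly (including the observation that $f^b(1)\in\{1,2\}$, so one extra application of $f$ may be needed in the converse).

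One point should be stated more honestly. Your reformulated relation is \emph{not} literally the same as ``$n,m$ lie in some common $S$-orbit.'' For instance $f^1(5)=8=f^2(32)$, yet no single $S$-orbit contains both $5$ and $32$: neither lies in the forward orbit of the other, and if $f^i(p)=5$, $f^j(p)=32$ then either $f^{j-i}(5)=32$ or $f^{i-j}(32)=5$, both impossible. In fact the paper's literal relation is reflexive and symmetric but not transitive ($5\Td 8$ via the orbit of $5$ and $8\Td 32$ via the orbit of $32$, yet $5\not\Td 32$ in that sense). What you have actually written down is the equivalence relation \emph{generated} by the paper's relation, i.e.\ its transitive closure, and this is the only reading under which the theorem is true. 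Say so explicitly rather than claiming the two relations ``coincide''; your one-line justification only checks one implication. Your first pass at symmetry also has an index slip (the shared orbit should be the orbit of $n$, not of $m$, and the shift is $j-i$, not $j$); once you commit to the reformulation you can simply delete that paragraph.
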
 
%

Let $(a_k)_{k \in \N_0}$ be a cyclic or an infinite $S-orbit$ of $n \in \N$. The set $A=\{a_k,$ the terms of the $S-orbit$ of $n\}$ is the class of the natural numbers $m \Td n$. By the Well Ordering Principle, $A \subset \N$ has a minimum element $w_n \in A$. Since $w_n \Td n$, we denote the class $A$ by $\overline{w_n}$. Since any equivalence class of $\Td$ represents an $S-orbit$ of any $n \in \N$, the set of all equivalence classes of $\Td$ is a totally ordered set with a partition $\overline{1} \cup \mathfrak{C} \cup \mathfrak{F}$, were $\mathfrak{C}$ is the set of all cyclic $S-orbit$ and $\mathfrak{F}$ the class of all infinite $S-orbit$.

\begin{prop}
Let $(a_k)_{k \in \N_0}$ be a cyclic or an infinite $S-orbit$ of $n \in \N$m and $w_n$ the minimum element of $(a_k)_{k \in \N_0}$. Then $w_n \equiv 3 \pmod 4$.
\end{prop}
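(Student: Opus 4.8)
The plan is to exploit the minimality of $w_n$ together with the two elementary inequalities already noted in the introduction: that $f$ strictly decreases on even numbers, and that $f^2$ strictly decreases on odd numbers congruent to $1$ modulo $4$ (other than $1$). Write $w_n=a_j$ for an index at which the minimum is attained. Then $a_{j+1}=f(w_n)$ and $a_{j+2}=f^2(w_n)$ are again terms of the sequence $(a_k)_{k\in\N_0}$, hence both are $\ge w_n$. The whole argument consists in deriving a contradiction from $f(w_n)<w_n$ or from $f^2(w_n)<w_n$.

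First I would rule out $w_n=1$. The $S$-orbit under consideration is cyclic but not $\overline{1}$ (an element of $\mathfrak{C}$) or infinite (an element of $\mathfrak{F}$); in either case it differs from $\overline{1}$. But if $1$ occurred among the $a_k$, then $f(1)=2$ and $f(2)=1$ would also occur, and the orbit would be $(\ldots,1,2,1,2,\ldots)$, that is the class $\overline{1}$, a contradiction. Hence $1\notin\{a_k:k\in\N_0\}$, so $w_n\ge 2$. Next, $w_n$ must be odd: if $\mat{w_n}{0}{2}$, then $f(w_n)=\frac{w_n}{2}\in\N$ with $\frac{w_n}{2}<w_n$ since $w_n\ge 2$, contradicting minimality. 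Thus $w_n$ is odd and $w_n\ge 3$.

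It remains to exclude $\mat{w_n}{1}{4}$. Suppose $\mat{w_n}{1}{4}$; then $w_n\ge 5$ and $3w_n+1\equiv 0\pmod 4$, so $f(w_n)=\frac{3w_n+1}{2}$ is even and $f^2(w_n)=\frac{3w_n+1}{4}=w_n+\frac{1-w_n}{4}<w_n$ because $w_n>1$. Since $f^2(w_n)$ is a term of the $S$-orbit of $n$, this again contradicts the minimality of $w_n$. As $w_n$ is odd and not congruent to $1$ modulo $4$, we conclude $w_n\equiv 3\pmod 4$, as claimed.

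I do not expect a substantive obstacle here. The only point that calls for care is the first reduction, where one must use the partition $\overline{1}\cup\mathfrak{C}\cup\mathfrak{F}$ precisely in order to legitimately exclude $w_n=1$, together with the routine remark that $f(w_n)$ and $f^2(w_n)$ are themselves terms of $(a_k)_{k\in\N_0}$, which is exactly what makes the minimality of $w_n$ applicable to them.
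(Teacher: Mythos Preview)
Your proof is correct and, in fact, the paper does not supply a formal proof of this proposition at all; it simply states it. The argument you give is exactly the one the authors sketch informally in the Introduction when they explain why induction modulo $4$ stalls precisely at the residue class $3$: even $n$ drops under $f$, and odd $n\equiv 1\pmod 4$ drops under $f^2$. You have correctly transported those two inequalities to the minimum element of a non-trivial orbit and used the partition $\overline{1}\cup\mathfrak{C}\cup\mathfrak{F}$ to exclude $w_n=1$, which is the only additional care the paper's framing requires.
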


In \cite{evt}, is proved a very interesting result which allow us to conclude that almost every natural number has the property that $f^l(n)<n$ for some $k$. Since the set $F$ of all infinite $S-orbit$, is an order set, the result of Everett indicates that $F$ has an upper bound?

Let $(a_k)_{k \in \N_0}$ be cyclic. 
Then we can find a solution for the equation $f^k(x)=x$.
 If we iterate $f$, the solution is  of the form $x=\frac{b}{2^k-a}$.
 Thus, for $k$, the set $C^k=\{(a,b)|f^k(x)=frac{ax+b}{2^4}\}$ is of the form $C^{K-1} \cup fC^{k-1}$. 
If $k=3$ this set is
$$\{(1,0),(3,1),3,2)3^2,3+2)\}\cup \{(3,2^2),(3^2,3*2+2^2),(3^2,3+2^2),(3^3,3^2+3*2+2^2)\}.$$
Which the solutions is $x=0$, $x=-10$, $x=-7$, $x=-5$ and $x=-1$. If $k=2$ we obtain the solution $x=1$. If $k>2$, we realize that when $2^k-a>0$, $2^k-a \not{\mid} b$. 

Also, a cyclic $S-orbit$ of n, such that $f^k(n)=n$, induce cyclic $S-orbits$ of $2^m*n$. In fact, $f^m(2^m*n)=n$, and then $f^{m+k}(2^m*n)=f^k(n)=n$.

Clearly, the set $2^N=\{1,2,2^4, \cdots, 2^n, \cdots \}$ is of Collatz numbers. Next we present non-trivial sets likely.

\section{An algebraic structure of Collatz number}

The main result of this section is the theorem:

\begin{teo}[An algebraic structure] \label{te k}
Let  $k_i, m_i \in \N$, $1\leq i \leq n$. If $0^{k_1}1^{m_1} \cdots 0^{k_n}1^{m_n}(\alpha)=1$, then
$$\alpha=\co{\frac{2}{3}}^{m_{n}}\pa{2^{k_{n}}\pa{\co{\frac{2}{3}}^{m_{n-1}}\pa{2^{k_{n-1}} \pa{ \cdots \pa{\co{\frac{2}{3}}^{m_{1}}\pa{2^{k_{1}}+1}-1} \cdots} +1}-1}+1}-1  \ \ \ \ \ \  \ (\star)$$ 
\end{teo}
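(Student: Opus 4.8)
The plan is to set integrality aside and read the statement as a fact about affine bijections of $\mathbb{Q}$. The relations $0$ and $1$ are the restrictions to subsets of $\N$ of the affine bijections $\sigma_0(x)=\frac{x}{2}$ and $\sigma_1(x)=\frac{3x+1}{2}$ of $\mathbb{Q}$; wherever an $S$-composition is defined at a natural number it agrees with the corresponding composition of the $\sigma_i$, and that composition is again an affine bijection of $\mathbb{Q}$. Hence the hypothesis $0^{k_1}1^{m_1}\re 0^{k_n}1^{m_n}(\alpha)=1$ pins $\alpha$ down as the value at $1$ of the inverse of this affine map, and the entire content of the theorem is to write that inverse out explicitly; in particular $\alpha\in\N$ is never used.

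The single computation I would isolate as a lemma is the closed form of a run of $1$'s: a one-line induction on $m$ (or the observation that the shift $x\mapsto x+1$ conjugates $\sigma_1$ to multiplication by $\frac{3}{2}$) gives
$$1^m(x)=\co{\frac{3}{2}}^{m}(x+1)-1,\qquad\mbox{so}\qquad 1^{-m}(y)=\co{\frac{2}{3}}^{m}(y+1)-1,$$
together with the obvious $0^k(x)=\frac{x}{2^k}$ and $0^{-k}(y)=2^k y$. Composing these two inverses in alternation is exactly what generates the pattern ``multiply by $2^{k_i}$, add $1$, then multiply by $\co{\frac{2}{3}}^{m_i}$ and subtract $1$'' visible in $(\star)$.

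With these facts I would induct on $n$. Set $\gamma_0=1$ and $\gamma_i=\co{\frac{2}{3}}^{m_i}\pa{2^{k_i}\gamma_{i-1}+1}-1$ for $1\le i\le n$; unwinding this recursion is literally the right-hand side of $(\star)$, so it suffices to show $\alpha=\gamma_n$. For $n=1$: undoing $0^{k_1}$ in $0^{k_1}1^{m_1}(\alpha)=1$ gives $1^{m_1}(\alpha)=2^{k_1}$, i.e. $\co{\frac{3}{2}}^{m_1}(\alpha+1)-1=2^{k_1}$, hence $\alpha=\co{\frac{2}{3}}^{m_1}\pa{2^{k_1}+1}-1=\gamma_1$. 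For the inductive step, factor the composition as $g\circ h$ with $h=0^{k_n}1^{m_n}$ (applied first; it accounts for the two outermost layers of $(\star)$) and $g=0^{k_1}1^{m_1}\re 0^{k_{n-1}}1^{m_{n-1}}$ (applied afterwards, with $n-1$ blocks indexed consistently $1,\re,n-1$). Put $\alpha_1=h(\alpha)$. Then $g(\alpha_1)=1$, so the induction hypothesis gives $\alpha_1=\gamma_{n-1}$; on the other hand $\alpha_1=0^{k_n}1^{m_n}(\alpha)=\frac{\co{\frac{3}{2}}^{m_n}(\alpha+1)-1}{2^{k_n}}$, and solving this for $\alpha$ with the $n=1$ manipulation yields $\alpha=\co{\frac{2}{3}}^{m_n}\pa{2^{k_n}\gamma_{n-1}+1}-1=\gamma_n$, completing the induction. (One could instead bypass the induction: collapse $0^{k_1}1^{m_1}\re 0^{k_n}1^{m_n}$ to a single affine map $x\mapsto ax+b$ with $a$ an explicit product of powers of $\frac{3}{2}$ and $\frac{1}{2}$, solve $ax+b=1$, and then rearrange $(1-b)/a$ into the nested shape $(\star)$; the inductive peeling just performs that rearrangement step by step.)

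The argument has essentially no depth; the one thing demanding care is the orientation of the concatenation. In the string $0^{k_1}1^{m_1}\re 0^{k_n}1^{m_n}$ the \emph{rightmost} block $1^{m_n}$ is applied \emph{first} (in keeping with the convention $s_k\circ\re\circ s_1$ and with the paper's own instance $10011(19)=17$), so the innermost parenthesis of $(\star)$ is the one carrying $k_1,m_1$ while the outermost carries $k_n,m_n$; reading the string the other way would invert the whole formula. The remaining bookkeeping — lining up the nested parentheses of $(\star)$ against the recursion $\gamma_0=1$, $\gamma_i=\co{\frac{2}{3}}^{m_i}\pa{2^{k_i}\gamma_{i-1}+1}-1$ — is purely cosmetic. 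Everything else reduces to the two-line affine manipulation above.
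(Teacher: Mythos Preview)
Your proof is correct and follows essentially the same route as the paper's: both isolate the inversion formula $1^{-m}(y)=\co{\frac{2}{3}}^{m}(y+1)-1$ (the paper's Lemma~\ref{K}) and then induct on $n$, peeling off the first-applied block $0^{k_n}1^{m_n}$, applying the induction hypothesis to the remaining $n-1$ blocks, and inverting that last block to recover $\alpha$. Your $\gamma_i$ recursion is exactly the content of the paper's Lemma~\ref{KI}; the only cosmetic addition is your explicit framing of the maps as affine bijections of $\mathbb{Q}$ (making clear that integrality plays no role) and your remark that one could bypass the induction by collapsing everything to a single affine map.
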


The following results are used to prove this theorem.

\begin{lem} \label{K}
Let $m$ be a natural  number and $1^{m} \in S$. If $\alpha$ and $\alpha_1$ are natural numbers, and $\alpha_{1}=1^{m}(\alpha)$, then $\alpha=\co{\frac{2}{3}}^m(\alpha_1+1)-1$.
\end{lem}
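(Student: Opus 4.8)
The plan is to prove the identity by induction on $m$, the cleanest route being a single change of variable that turns the map $1$ into a dilation. Recall that $1\colon \N\to\N$ is the (partial) map $x\mapsto\frac{3x+1}{2}$, and observe the elementary functional equation
$1(x)+1=\frac{3x+1}{2}+1=\frac{3(x+1)}{2}$.
In other words, after substituting $y=x+1$ the map $1$ becomes multiplication by $\tfrac32$. Since the hypothesis is that $\alpha_1=1^{m}(\alpha)$ is a natural number, all the intermediate iterates $1^{j}(\alpha)$ for $0\le j\le m$ are well-defined (necessarily odd) natural numbers, so the algebra below is legitimate at every stage.

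From the functional equation, a straightforward induction on $m$ gives $1^{m}(\alpha)+1=\co{\frac{3}{2}}^{m}(\alpha+1)$ whenever the left side is defined: the case $m=1$ is exactly the displayed identity, and if the formula holds for $m$ then $1^{m+1}(\alpha)+1=1\!\left(1^{m}(\alpha)\right)+1=\frac{3}{2}\left(1^{m}(\alpha)+1\right)=\frac{3}{2}\,\co{\frac{3}{2}}^{m}(\alpha+1)=\co{\frac{3}{2}}^{m+1}(\alpha+1)$. Now put $\alpha_1=1^{m}(\alpha)$; solving for $\alpha+1$ yields $\alpha+1=\co{\frac{2}{3}}^{m}(\alpha_1+1)$, hence $\alpha=\co{\frac{2}{3}}^{m}(\alpha_1+1)-1$, which is the assertion of the lemma.

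If one prefers to avoid the substitution, the same statement follows by inducting directly on the block length. The base case $m=1$ is the equivalence $\frac{3\alpha+1}{2}=\alpha_1\iff\alpha=\frac{2\alpha_1-1}{3}=\co{\frac{2}{3}}(\alpha_1+1)-1$. For the inductive step write $\alpha_1=1^{m}(\beta)$ with $\beta=1(\alpha)$, apply the inductive hypothesis to get $\beta=\co{\frac{2}{3}}^{m}(\alpha_1+1)-1$, and then invert the single application $\beta=\frac{3\alpha+1}{2}$ using the base case to obtain $\alpha=\co{\frac{2}{3}}(\beta+1)-1=\co{\frac{2}{3}}\!\left(\co{\frac{2}{3}}^{m}(\alpha_1+1)\right)-1=\co{\frac{2}{3}}^{m+1}(\alpha_1+1)-1$.

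I do not expect any real obstacle: the entire content is the functional equation $1(x)+1=\tfrac32(x+1)$ together with a bookkeeping induction, and the only point needing a word of care is the well-definedness of the iterate $1^{m}$ as a partial map on $\N$, which the hypothesis $1^{m}(\alpha)=\alpha_1\in\N$ supplies. This lemma is evidently meant to be paired with the companion observation that the block $0^{k}$, with $0\colon x\mapsto x/2$, satisfies $\alpha=2^{k}\cdot 0^{k}(\alpha)$; alternating the two and peeling the blocks $0^{k_n}1^{m_n},\ldots,0^{k_1}1^{m_1}$ off $\alpha$ one at a time then produces the nested expression $(\star)$ of Theorem~\ref{te k}.
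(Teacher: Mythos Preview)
Your proof is correct and follows essentially the same route as the paper: a short induction on $m$, with base case $\alpha=\frac{2\alpha_1-1}{3}=\co{\frac{2}{3}}(\alpha_1+1)-1$ and an inductive step that strips off one factor of $1$ and applies the hypothesis to the remaining block $1^{m}$. The only cosmetic differences are that the paper peels the outermost factor (setting $\alpha'=1^{m}(\alpha)=1^{-1}(\alpha_1)$ and invoking the hypothesis on $1^{m}(\alpha)=\alpha'$) while your second argument peels the innermost ($\beta=1(\alpha)$), and your conjugation identity $1(x)+1=\tfrac{3}{2}(x+1)$ packages the same induction a bit more cleanly without introducing a genuinely new idea.
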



\begin{lem} \label{KI}
Let $\alpha$, $n \in \N$ and  $0^{k_1}1^{m_1} \cdots 0^{k_n}1^{m_n}(\alpha)=1$. To $1\leq i\leq n$, we recursively calculate $\alpha$ to of the  powers $0^{k_i}1^{m_i}$, with $\alpha_i=2^{k_i}\beta_{i-1}$, $\beta_i=1^{-m_i}(\alpha_i)$ and $\alpha_1 = 0^{-k_1}(1) = 2^{k_1}$. Hence, finishing the iterations when $\alpha = \beta_n$.
\end{lem}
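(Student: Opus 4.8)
\section*{Proof proposal}

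The plan is to prove $(\star)$ by induction on the number $n$ of $0$--$1$ blocks in the $S$-composition, inverting one block at a time. First I would record the two inversion rules that make this work. The relation $0$ sends $x$ to $x/2$, so wherever it is defined its inverse is multiplication by $2$; hence $0^{k_i}$ is inverted by $x\mapsto 2^{k_i}x$ (this is exactly Proposition~\ref{co11}, since $f^{k}(2^{k}m)=m$). The relation $1$ sends $x$ to $\frac{3x+1}{2}=\frac{3}{2}(x+1)-1$, i.e.\ it is conjugate, via the shift $x\mapsto x+1$, to multiplication by $\frac{3}{2}$; iterating, $1^{m_i}$ is conjugate to multiplication by $(3/2)^{m_i}$, and this is precisely Lemma~\ref{K}: if $\alpha_1=1^{m_i}(\alpha)$ then $\alpha=\left[\frac{2}{3}\right]^{m_i}(\alpha_1+1)-1$. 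One important remark to make at the outset is that, since the hypothesis asserts $0^{k_1}1^{m_1}\cdots 0^{k_n}1^{m_n}(\alpha)=1\in\N$ while the relations $0$ and $1$ are only partial maps on $\N$, every intermediate value obtained while reading the composition is itself a natural number; hence at each step the inverse relations above act as genuine functions returning natural numbers, and all the manipulations are legitimate.

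For the base case $n=1$ we have $0^{k_1}1^{m_1}(\alpha)=1$. Reading from the outside, $0^{k_1}$ is applied last, so $1^{m_1}(\alpha)=2^{k_1}$, and Lemma~\ref{K} gives $\alpha=\left[\frac{2}{3}\right]^{m_1}(2^{k_1}+1)-1$, which is $(\star)$ for $n=1$. For the inductive step, assume the statement for compositions of $n-1$ blocks. Given $0^{k_1}1^{m_1}\cdots 0^{k_n}1^{m_n}(\alpha)=1$, set $\alpha'=0^{k_n}1^{m_n}(\alpha)\in\N$. Grouping the composition as $\bigl(0^{k_1}1^{m_1}\cdots 0^{k_{n-1}}1^{m_{n-1}}\bigr)\bigl(0^{k_n}1^{m_n}\bigr)$ gives $0^{k_1}1^{m_1}\cdots 0^{k_{n-1}}1^{m_{n-1}}(\alpha')=1$, so the induction hypothesis applies to $\alpha'$ with the parameters $k_1,\dots,k_{n-1},m_1,\dots,m_{n-1}$; call the resulting nested expression $\gamma_{n-1}$, so $\alpha'=\gamma_{n-1}$. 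Now $\alpha'=1^{m_n}(\alpha)/2^{k_n}$, hence $1^{m_n}(\alpha)=2^{k_n}\gamma_{n-1}$, and one last application of Lemma~\ref{K} yields $\alpha=\left[\frac{2}{3}\right]^{m_n}\!\bigl(2^{k_n}\gamma_{n-1}+1\bigr)-1$. Substituting the expression for $\gamma_{n-1}$ reproduces $(\star)$ for $n$, completing the induction. Equivalently, one may run the outside-in recursion of Lemma~\ref{KI} --- $\alpha_1=2^{k_1}$, $\alpha_i=2^{k_i}\beta_{i-1}$, $\beta_i=1^{-m_i}(\alpha_i)$, $\alpha=\beta_n$ --- and observe that $(\star)$ is nothing but its closed form.

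There is no serious analytic obstacle here; the theorem is a bookkeeping identity. The part that requires genuine care is matching the alternating pattern of the recursion --- ``multiply by $2^{k_i}$'' coming from inverting $0^{k_i}$, then ``add $1$, apply $\left[\frac{2}{3}\right]^{m_i}$, subtract $1$'' coming from inverting $1^{m_i}$ via Lemma~\ref{K} --- against the precise nesting displayed in $(\star)$, together with the choice to peel the \emph{innermost} block $0^{k_n}1^{m_n}$ first, so that the instance handed to the induction hypothesis stays of the form ``$\cdots(\alpha')=1$'' rather than ``$\cdots(\alpha')=t$'' for some shifted target $t$. Once the peeling direction and the two inversion lemmas are fixed, the induction step is a direct substitution.
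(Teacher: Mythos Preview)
Your argument is correct, but note that what you have written is really a proof of Theorem~\ref{te k} (the closed form $(\star)$), not of Lemma~\ref{KI} as stated. Lemma~\ref{KI} only asserts the recursive description: starting from $\alpha_1=0^{-k_1}(1)=2^{k_1}$ and alternately applying $\beta_i=1^{-m_i}(\alpha_i)$ and $\alpha_{i+1}=2^{k_{i+1}}\beta_i$, one lands at $\beta_n=\alpha$. The paper treats this as the intermediate step --- a direct block-by-block inversion using $0^{-k}(x)=2^kx$ and Lemma~\ref{K} for $1^{-m}$ --- and then, separately, proves Theorem~\ref{te k} by the induction on $n$ that you carry out (peel off the innermost block $0^{k_n}1^{m_n}$, apply the induction hypothesis to $\alpha'=0^{k_n}1^{m_n}(\alpha)$, then invert that last block). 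You have merged the two: your induction simultaneously establishes the recursion of Lemma~\ref{KI} and its closed form $(\star)$, and you correctly observe at the end that the former is just the step-by-step reading of the latter. The ingredients and the peeling direction are identical to the paper's; the only difference is packaging.
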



Now we prove the algebraic theorem proposed.

\begin{proof}
We prove by induction on $n$, the index of the pairs of powers.
 If $n=1$, then $0^{k_1}1^{m_1}(\alpha)=1$ and  $0^{-k_1} \circ 0^{k_1}1^{m_1}(\alpha)=1^{m_1}(\alpha)=0^{-k_1}(1) = 2^{k_1}$ and by Lemma  \ref{K}, $\alpha = \co{\frac{2}{3}}^{m_1}(2^{k_{1}}+1)-1$.
 Since $0^{k_1}1^{m_1} \re 0^{k_i}1^{m_i}(\alpha)=1$, we assume $$\alpha=\co{\frac{2}{3}}^{m_{i}}\pa{2^{k_{i}}\pa{\co{\frac{2}{3}}^{m_{i-1}}\pa{2^{k_{i-1}} \pa{ \cdots \pa{\co{\frac{2}{3}}^{m_{1}}\pa{2^{k_{1}}+1}-1} \cdots} +1}-1}+1}-1$$ as the I. H.. 
Thus, to $i+1$, we have 
$0^{k_1}1^{m_1} \re 0^{k_i}1^{m_i} 0^{k_{i+1}} 1^{m_{i+1}} (\alpha)=1$
 $(\star)$ and $\alpha ' = 0^{k_{n+1}}1^{m_{n+1}}(\alpha)$.
 In this way, we rewrite the expression $(\star)$ as $0^{k_{1}}1^{m_{1}} \re 0^{k_{i}}1^{m_{i}}(\alpha ')=1$. 
By I.H., $$ \alpha ' =\co{\frac{2}{3}}^{m_{i}}\pa{2^{k_{i}}\pa{\co{\frac{2}{3}}^{m_{i-1}}\pa{2^{k_{i-1}} \pa{ \cdots \pa{\co{\frac{2}{3}}^{m_{1}}\pa{2^{k_{1}}+1}-1} \cdots} +1}-1}+1}-1.$$
 Hence again, by Lemmas \ref{K} as \ref{KI} we have $\alpha= \co{\frac{2}{3}}^{m_{i+1}} (2^{k_{i+1}} \alpha ' +1) -1$ $(\star \star)$, substituting $\alpha '$ in $(\star \star)$, we conclude $$ \alpha =\co{\frac{2}{3}}^{m_{i+1}}\pa{2^{k_{i+1}}\pa{\co{\frac{2}{3}}^{m_{i}}\pa{2^{k_{i}} \pa{ \cdots \pa{\co{\frac{2}{3}}^{m_{1}}\pa{2^{k_{1}}+1}-1} \cdots} +1}-1}+1}-1.$$

\end{proof}

Writing $0^{k_1}1^{m_1} \cdots 0^{k_n}1^{m_n}(\alpha)=1$, clearly, if $\alpha \in \N$, it is a Collatz number of length $\vartheta(\alpha) = \sum_{i=1}^{n}(k_i+m_i)$. Hence, the summ of the powers is a  partition of $\vartheta (\alpha)$. So, if we want to determine the Collatz numbers of a predefined length $\vartheta$, we represent it with the properly partition of $\vartheta$. Bellow, we list simple rules which reduces considerably the amount of this partitions.

Let $\vartheta$ be natural and $P=\{1+1+ \cdots+1, 2+1+ \cdots+1, \cdots, 1+(\vartheta-1),\vartheta\}$ the set of all partitions of $n$. If $0^{k_1}1^{m_1} \cdots 0^{k_n}1^{m_n}(\alpha)=1$ and $\vartheta(\alpha) = \sum_{i=1}^{n}(k_i+m_i)$, we can reduce the elements of $p \in P$ such that $\alpha$ is a Collatz number. 

Although, $p \in P$ is such that $p=p_1+p_2+\cdots +p_n=p_2+p_1+\cdots +p_n$, the powers $k_1=p_1, k_2=p_2, \cdots k_n=p_n$ and $k_1=p_2, k_2=p_1, \cdots k_n=p_n$, if define a Collatz number $\alpha$, they produce different numbers. Any away, we can use the general rules bellow:
\begin{enumerate}
\item $p$ cannot start with $1$.
\item $p$ cannot start with $01$
\item $k_1 \equiv 1 \pmod 2$, if $\alpha \neq 2^\vartheta$
\end{enumerate}

In fact, if we want to determine the numbers with Collatz length $6$, we write\\
$P=\{1+1+1+1+1+1, \\ 
2+1+1+1+1, \\
2+2+1+1, 3+1+1+1, \\
2+2+2,3+2+1,4+1+1, \\
2+4,3+3,5+1, \\
6\}$.

As defined before, the Collatz of length $6$ are elements of the set $f^6=\{000000,000001, \cdots 111111\}$ which $|f^6|=2^6$. The rule $1.$ reduces this set to $2^5$. The rule $2.$ reduces $2^5$ to $2^5-2^4=2^4$ and the rule $2.$ reduces it to $6$. Also, the rules exclude the partitions: \\
with $6$ summands: $010101$, \\
with $5$ summands: $1^2(01)^2,10^2101,101^201,1010^21,(10)^21^2$; $01^2010,010^210,0101^20,(01)^20^2$ and $0^21010$;\\
We going on listing the no excluding partions:\\
with  $4$ summands: $0^3101$\\
with $3$ summands:$0^31^20,0^310^2$\\
with $2$ summands:$0^31^3$ and $0^51$\\
with $1$ summands: it always exists $0^6$.\\

And the Collatz numbers of length $6$ are $2^6,\ 21, \ 6$ and $20$. Hence    $0^6(2^6)=0^51(21)=0^31^20(6)=0^310^2(20)=1$

\begin{coro}
If $0^{k}1^{m}(\alpha)=1$, then $\alpha=\co{\frac{2}{3}}^m(2^k+1)-1$ and $\alpha \in \N \Longleftrightarrow k=(2l-1)3^{m-1}$, $l \in \N$.
\end{coro}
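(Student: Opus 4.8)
The plan is to specialize the algebraic Theorem \ref{te k} to the case $n=1$ and then analyze when the resulting closed-form expression is a natural number. Setting $n=1$ in $(\star)$, or equivalently applying Lemma \ref{K} together with Lemma \ref{KI} with a single pair of powers $0^{k}1^{m}$, immediately gives $\alpha = \co{\frac{2}{3}}^m(2^k+1)-1$, which is the first assertion; in fact the argument for $n=1$ already appears verbatim at the start of the proof of Theorem \ref{te k}, so nothing new is needed there. The real content is the characterization of integrality, so I would devote the proof to the equivalence $\alpha \in \N \Longleftrightarrow k=(2l-1)3^{m-1}$.

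For the forward-and-backward analysis I would write $\alpha+1 = \co{\frac{2}{3}}^m(2^k+1) = \frac{2^m(2^k+1)}{3^m}$. Since $\gcd(2,3)=1$, the factor $2^m$ is irrelevant to divisibility by $3^m$, so $\alpha \in \N$ if and only if $3^m \mid 2^k+1$. The key step is therefore the purely number-theoretic lemma: for $m \geq 1$, $3^m \mid 2^k+1$ if and only if $k \equiv 3^{m-1} \pmod{2\cdot 3^{m-1}}$, i.e. $k = (2l-1)3^{m-1}$ for some $l \in \N$. I would prove this by lifting-the-exponent or by an explicit induction on $m$: the base case $m=1$ says $3 \mid 2^k+1 \Leftrightarrow k$ odd, which is clear since $2 \equiv -1 \pmod 3$; for the inductive step one uses that the multiplicative order of $2$ modulo $3^m$ is $2\cdot 3^{m-1}$ (a standard fact, provable via LTE: $v_3(2^k+1) = v_3(2+1) + v_3(k) = 1 + v_3(k)$ when $k$ is odd, and $3 \nmid 2^k+1$ when $k$ is even), so that $v_3(2^k+1) \geq m$ exactly when $k$ is odd and $v_3(k) \geq m-1$, which is precisely $k=(2l-1)3^{m-1}$.

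The main obstacle is establishing the order-of-$2$ / valuation fact cleanly; everything else is bookkeeping. If one does not want to invoke the Lifting the Exponent lemma as a black box, the honest route is the two-line induction: assuming $2^{3^{m-1}} \equiv -1 \pmod{3^m}$ but $2^{3^{m-1}} \not\equiv -1 \pmod{3^{m+1}}$, write $2^{3^{m-1}} = -1 + 3^m t$ with $3 \nmid t$, cube both sides, and read off $2^{3^{m}} \equiv -1 \pmod{3^{m+1}}$ with the sharper non-divisibility, which both advances the induction and pins down the order. Then combining $3^m \mid 2^k+1 \Leftrightarrow 2^k \equiv -1 \pmod{3^m} \Leftrightarrow k$ is an odd multiple of $3^{m-1}$ with the reduction $\alpha+1 = \frac{2^m(2^k+1)}{3^m}$ finishes the corollary.
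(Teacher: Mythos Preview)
Your argument is correct and follows the same overall route as the paper: specialize Theorem \ref{te k} to $n=1$ to obtain $\alpha=\co{\frac{2}{3}}^m(2^k+1)-1$, reduce $\alpha\in\N$ to the condition $3^m\mid 2^k+1$ via $\gcd(2,3)=1$, and then characterize the admissible $k$. The paper handles this last step by invoking Euler's theorem ($\phi(3^m)=2\cdot 3^{m-1}$) to pass from $2^{2k}\equiv 1\pmod{3^m}$ to $2\cdot 3^{m-1}\mid 2k$, while you instead compute $v_3(2^k+1)$ via LTE or the explicit cubing induction. Your version is actually more careful on this point: Euler's theorem alone only says the order of $2$ divides $\phi(3^m)$, so the paper is tacitly using that $2$ is a primitive root modulo $3^m$, a fact your induction (or LTE) proves outright. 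You also treat both directions of the equivalence, whereas the paper's written proof only argues the forward implication.
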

\begin{proof}
By the Theorem, $\alpha=\co{\frac{2}{3}}^m(2^k+1)-1$. If $\alpha \in \N$, then $2^m(2^k+1) \equiv 0 \pmod {3^m}$. Since $(2.3)=1$, the CGD, then 
$2^k \equiv -1 \pmod {3^m}$, in particular $3 \mid 2^k-1$, then $k$ is an odd number. Clearly $2^{2k} \equiv 1 \pmod {3^m}$. If $\phi(3^m)$ denotes the number of positive integers which are prime relative to $3^m$, then $\phi(3^m)=2*3^{m-1}$. By the Theorem of Euler, $2k \equiv 0 \pmod {2*3^{m-1}}$, then $k=q*3^{m-1}$. Since $k$ is odd we have $k=(2l-1)3^{m-1}$, $l \in \N$.
\end{proof}

According to the corollary, the set $\{\co{\frac{2}{3}}^m(2^k+1)-1, k=(2l-1)3^{m-1}, l,m \in \N\}$ is an infinite subset of the natural numbers which every element is a Collatz number. In \cite{and} some sets are also presented, but in a different way.

Next we present some sets obtained as solutions of the algebraic structure present in the last theorem.

\section{Sets of Collatz numbers}

\begin{teo}
Let $0^{k_1}1^{m_1}0^{k_2}1^{m_2}(\alpha)=1$ ($\star$). Then $\alpha=\co{\frac{2}{3}}^{m_{2}}\pa{2^{k_{2}} \pa{\co{\frac{2}{3}}^{m_{1}}\pa{2^{k_{1}}+1}-1} +1}-1$. Futhermore,  the set $$\{\alpha, \ k_2=2l_2*3^{ m_{2}-1} \ and \ k_1=(2l_1-1)3^{m_1+m_2-1}, \ l1,l_2,m_1,m_2 \in \N\}$$ is a
 subset of the set of the solutions of $\alpha \in \N$.
\end{teo}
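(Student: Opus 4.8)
The first assertion is nothing but the $n=2$ instance of Theorem~\ref{te k}: applying that theorem to the partition $0^{k_1}1^{m_1}0^{k_2}1^{m_2}$ yields exactly the stated closed form for $\alpha$. So I would dispatch it in one line and spend all the effort on the ``Furthermore'' part, which is the claim that the displayed parametrization always lands inside $\N$. Following the recursion of Lemma~\ref{KI}, I set $\alpha_1=2^{k_1}$, $\beta_1=\co{\frac{2}{3}}^{m_1}\pa{2^{k_1}+1}-1$, $\alpha_2=2^{k_2}\beta_1$ and $\alpha=\beta_2=\co{\frac{2}{3}}^{m_2}\pa{2^{k_2}\beta_1+1}-1$, and I then check, step by step, that every division performed is exact and that the resulting integer is positive.

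The single arithmetic fact driving everything is the one already used in the proof of the Corollary: $2$ is a primitive root modulo every power of $3$, i.e.\ $\mathrm{ord}_{3^{j}}(2)=2\cdot 3^{j-1}$ and hence $2^{3^{j-1}}\equiv -1\pmod{3^{j}}$ for all $j\in\N$. The argument then splits into two stages. First, from $k_1=(2l_1-1)3^{m_1+m_2-1}$ and $2^{3^{m_1+m_2-1}}\equiv-1\pmod{3^{m_1+m_2}}$, raising to the odd power $2l_1-1$ gives $2^{k_1}\equiv-1\pmod{3^{m_1+m_2}}$, so $2^{k_1}+1=3^{m_1+m_2}N$ with $N\in\N$ (and $N\ge1$ since $2^{k_1}\ge 3^{m_1+m_2}-1$). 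Therefore $\beta_1=2^{m_1}3^{m_2}N-1\in\N$; it is in fact odd, its numerator over $3^{m_1}$ being even minus odd. Second, substituting this value and simplifying gives $\alpha=2^{m_1+m_2+k_2}N+\dfrac{2^{m_2}\pa{1-2^{k_2}}}{3^{m_2}}-1$, and since $k_2=2l_2\,3^{m_2-1}$ is a multiple of $\mathrm{ord}_{3^{m_2}}(2)=2\cdot 3^{m_2-1}$ we get $2^{k_2}\equiv1\pmod{3^{m_2}}$, so $3^{m_2}\mid 2^{m_2}\pa{1-2^{k_2}}$ and $\alpha\in\mathbb{Z}$.

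To finish I would give a crude size estimate: $2^{m_1+m_2+k_2}N\ge 4\cdot 2^{k_2}$ (using $m_1,m_2\ge1$, $N\ge1$), while $\dfrac{2^{m_2}\pa{2^{k_2}-1}}{3^{m_2}}<2^{k_2}$, so $\alpha>3\cdot 2^{k_2}-1>0$; hence $\alpha\in\N$. Moreover the same divisibility conditions make the forward composition legitimate (each intermediate value of the two $1$-blocks is integral because $3^{j}\mid 2^{k_1}+1$ for $j\le m_1$ and $3^{j}\mid \alpha_2+1$ for $j\le m_2$, and each such value is odd), so $\alpha$ is genuinely a Collatz number of length $k_1+m_1+k_2+m_2$.

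The work here is essentially bookkeeping rather than a deep obstacle, and that is precisely where I would be most careful: one must see that the exponent $m_1+m_2-1$ on the $3$ in $k_1$ — rather than the $m_1-1$ one might naively expect from the first $1$-block alone — is exactly what is forced once the second application of Lemma~\ref{K} demands $3^{m_2}\mid\beta_1+1$, and dually that taking an \emph{even} multiple $2l_2$ of $3^{m_2-1}$ in $k_2$ is what produces $2^{k_2}\equiv1$ rather than $2^{k_2}\equiv-1$ modulo $3^{m_2}$. Getting the direction of each congruence right, and confirming the interlocking of the two hypotheses, is the only delicate point; the algebraic simplification leading to the formula for $\alpha$ is routine.
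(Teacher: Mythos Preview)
Your proof is correct and follows essentially the same route as the paper: both arguments reduce to the pair of congruences $2^{k_1}\equiv -1\pmod{3^{m_1+m_2}}$ and $2^{k_2}\equiv 1\pmod{3^{m_2}}$, the paper obtaining them from the single-fraction form $\alpha=2^{m_2}\dfrac{2^{k_2+m_1}(2^{k_1}+1)-3^{m_1}(2^{k_2}-1)}{3^{m_1+m_2}}-1$ while you reach them by stepping through the $\beta_i$ of Lemma~\ref{KI}. Your additional checks (positivity of $\alpha$ via the size estimate and the legitimacy of each forward $1$-step) are not present in the paper's argument but are welcome, since the paper only verifies $\alpha\in\mathbb{Z}$ implicitly and never addresses $\alpha>0$.
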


\begin{proof}
The equation $\star$ shows that $\alpha=2^{m_2} \frac{2^{k_{2}+m_1} \pa {2^{k_{1}}+1} - 3^{m_1} \pa{2^{k_{2}}-1}}{3^{m_{1} + m_{2}}}-1$.\\ 
Then 
$2^{k_{2}+m_1} \pa {2^{k_{1}}+1} \equiv 3^{m_1} \pa{2^{k_{2}}-1} \pmod {3^{m_{1} + m_{2}}}$. We present a particular case where we can obtain a solution. Suppose  $2^{k_{2}}-1 \equiv 0 \pmod {3^{m_{2}}}$, then $2^{k_{1}}+1 \equiv 0 \pmod {3^{m_{1}+m_{2}}}$. 
Now, the solutions are trivial so that $k_2$ is an even number and $k2=2l_2*3^{ m_{2}-1}$ and $k_1$ is even and $k_1=(2l_1-1)*3^{m_1+m_2-1}$, and we determine a subset of the set of the solutions.
The set $\{\alpha, \ k_2=2l_2*3^{ m_{2}-1} \  and \ k_1=(2l_1-1)3^{m_1+m_2-1}, \ l1,l_2,m_1,m_2 \in \N\}$ is a subset of the set of the solutions.
\end{proof}

It is clear, by divisibility properties, $2^{k_{2}+m_1} \pa {2^{k_{1}}+1} \equiv 0 \pmod {3^{m_1}}$, and since $(3,2)=1$, we have $ 2^{k_{1}}+1 \equiv 0 \pmod {3^{m_1}}$, which the solution is in the corollary of the theorem. Thus $2^{k_{1}}+1=q*3^{m_1}$, and $2^{k_{2}+m_1} q*3^{m_1} \equiv 3^{m_1} \pa{2^{k_{2}}-1} \pmod {3^{m_{1} + m_{2}}}$. Then we have 
$2^{k_{2}+m_1}*q \equiv \pa{2^{k_{2}}-1} \pmod {3^{ m_{2}}}$. Then we can consider the conditions $(q,3)=1$ or $(q,3^k_q)=3^k_q$. The solutions can be obtained, but they are not easily described. In fact, $0^310^41^4(15)=1$, thus $k_1=3$, $m_1=1$, $k_2=m_2=4$ is a solution to $\alpha$, but not in the particular condition  $2^{k_{2}}-1 \equiv 0 \pmod {3^{m_{2}}}$. If we choose, for the subset of the theorem before $m_2=4$, then the minor $k_2=54$ and $m_1=1$, $k_1=81$ and $\alpha >>15$.

We can proceed as in the last theorem and determine subsets of the solutions when $$\alpha=\co{\frac{2}{3}}^{m_{3}}\pa{2^{k_{3}}\pa{ \co{\frac{2}{3}}^{m_{2}}\pa{2^{k_{2}} \pa{\co{\frac{2}{3}}^{m_{1}}\pa{2^{k_{1}}+1}-1} +1}-1} +1}-1$$

\begin{teo}
Let $0^{k_1}1^{m_1}0^{k_2}1^{m_2}0^{k_3}1^{m_3}(\alpha)=1$ ($\star$).\\ Then 
$\alpha=\co{\frac{2}{3}}^{m_{3}}\pa{2^{k_{3}}\pa{ \co{\frac{2}{3}}^{m_{2}}\pa{2^{k_{2}} \pa{\co{\frac{2}{3}}^{m_{1}}\pa{2^{k_{1}}+1}-1} +1}-1} +1}-1$. 
Futhermore,  the set
$\{\alpha, \textrm{$k_3=2l_3*3^{m_3-1}$, $k_2=2l_2*2^{m_2+m_3-1}$ and $k_1=(2l_1-1)*3^{m_1+m_2+m_3-1}$, $l_i, m_i \in \N, 1\leq i \leq 3$}\}.$
is a
 subset of the set of the solutions of $\alpha \in \N$.

\end{teo}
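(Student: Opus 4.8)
The plan is to carry the argument of the two preceding theorems one stage further. First I would apply Theorem~\ref{te k} with $n=3$ to obtain the closed form for $\alpha$ displayed in the statement, and then clear denominators by expanding the nested expression from the innermost bracket outward. Writing $M=m_{1}+m_{2}+m_{3}$, I expect this computation to yield
$$\alpha+1=\frac{2^{m_{3}}\Big[\,2^{k_{3}+m_{2}}\big(2^{k_{2}+m_{1}}(2^{k_{1}}+1)-3^{m_{1}}(2^{k_{2}}-1)\big)-3^{m_{1}+m_{2}}(2^{k_{3}}-1)\,\Big]}{3^{M}}.$$
Since $(3,2)=1$, the condition $\alpha\in\N$ is then equivalent to $3^{M}$ dividing the integer in square brackets, and I would record this as a single congruence modulo $3^{M}$.

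As in the two-pair theorem, rather than solve that congruence in full I would extract a clean sufficient family by imposing divisibility conditions on the three pairs one at a time, working from the outermost pair inward. If $2^{k_{3}}\equiv 1\pmod{3^{m_{3}}}$, then $3^{m_{1}+m_{2}}(2^{k_{3}}-1)$ is divisible by $3^{M}$, so the congruence reduces (using $(3,2)=1$) to $2^{k_{2}+m_{1}}(2^{k_{1}}+1)\equiv 3^{m_{1}}(2^{k_{2}}-1)\pmod{3^{M}}$. If moreover $2^{k_{2}}\equiv 1\pmod{3^{m_{2}+m_{3}}}$, the right-hand side is divisible by $3^{M}$, and the congruence collapses to $2^{k_{1}}\equiv -1\pmod{3^{M}}$. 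Hence the three requirements $2^{k_{3}}\equiv 1\pmod{3^{m_{3}}}$, $2^{k_{2}}\equiv 1\pmod{3^{m_{2}+m_{3}}}$, $2^{k_{1}}\equiv -1\pmod{3^{M}}$ together guarantee $\alpha\in\N$.

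It then remains to translate each congruence into explicit exponents, using — exactly as in the Corollary — that $2$ is a primitive root modulo every power of $3$, so $\mathrm{ord}_{3^{j}}(2)=2\cdot 3^{j-1}$ and $2^{3^{j-1}}\equiv -1\pmod{3^{j}}$. Thus $2^{k_{3}}\equiv 1\pmod{3^{m_{3}}}$ iff $2\cdot 3^{m_{3}-1}\mid k_{3}$, i.e. $k_{3}=2l_{3}3^{m_{3}-1}$; similarly $k_{2}=2l_{2}3^{m_{2}+m_{3}-1}$; and $2^{k_{1}}\equiv -1\pmod{3^{M}}$ iff $k_{1}\equiv 3^{M-1}\pmod{2\cdot 3^{M-1}}$, i.e. $k_{1}=(2l_{1}-1)3^{m_{1}+m_{2}+m_{3}-1}$. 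This is precisely the family asserted in the theorem, which is therefore a subset of the solutions with $\alpha\in\N$.

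The main difficulty I anticipate is not conceptual but bookkeeping: correctly performing the inside-out expansion and tracking which power of $3$ each summand carries. The one genuinely load-bearing idea is the order in which the conditions are imposed — from the outermost pair inward — since that is what makes each peeled term vanish modulo the full modulus $3^{M}$ rather than only a partial one. I would also remark, as the discussion after the two-pair theorem already shows for $n=2$, that the congruence $2^{k_{2}+m_{1}}(2^{k_{1}}+1)\equiv 3^{m_{1}}(2^{k_{2}}-1)\pmod{3^{M}}$ admits many solutions outside this family, so no converse is being claimed.
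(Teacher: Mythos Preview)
Your argument is correct and follows essentially the paper's approach: expand the nested expression to a single divisibility condition modulo $3^{M}$, then impose the three congruences $2^{k_3}\equiv 1\pmod{3^{m_3}}$, $2^{k_2}\equiv 1\pmod{3^{m_2+m_3}}$, $2^{k_1}\equiv -1\pmod{3^{M}}$ to make each summand vanish. The only difference is the direction of peeling: the paper imposes the condition on $k_1$ first and works outward to $k_3$, whereas you start with $k_3$ and work inward to $k_1$; both orderings produce the identical family, so your remark that the outermost-in order is the ``load-bearing idea'' is overstated---either order works, and indeed the paper's two-pair proof uses your order while its three-pair proof uses the reverse.
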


\begin{proof}
 Write $\alpha=\frac{2^{m_3}}{3^{m_{3}+m_{2}+m_{1}}}\pa{2^{k_{3}}\pa{ 2^{m_2+k_{2}+m_1} \pa {2^{k_{1}}+1}-3^{m_1}\pa{2^{k_{2}}-1}}+3^{m_{2}+m_{1}}(1-2^{k_{3}})    }-1$. 
If $\alpha \in \N$, then $2^{k_3+m_2+k_{2}+m_1} \pa {2^{k_{1}}+1} 
\equiv 3^{m_1}\pa{2^{k_3}(2^{k_{2}}-1)-3^{m_{2}}(1-2^{k_{3}})} \pmod{3^{m_{3}+m_{2}+m_{1}}}$.
 We suppose $2^{k_{1}}+1 \equiv 0 \pmod{3^{m_{3}+m_{2}+m_{1}}}$, then $2^{k_3}(2^{k_{2}}-1) \equiv 3^{m_{2}}(1-2^{k_{3}}) \pmod{3^{m_{3}+m_{2}}}$. Now, we take $2^{k_{2}}-1\equiv 0 \pmod{3^{m_{3}+m_{2}}}$, hence $1-2^{k_{3}} \equiv 0 \mod \pmod{3^{m_{3}}}$, thus $k_3$  and $k_2$ are even number. And 
$$\{\alpha, \textrm{$k_3=2l_3*3^{m_3-1}$, $k_2=2l_2*2^{m_2+m_3-1}$ and $k_1=(2l_1-1)*3^{m_1+m_2+m_3-1}$, $l_i, m_i \in \N, 1\leq i \leq 3$}\}.$$
\end{proof}

If $m_1=m_2=m_3=1$, $l_1=l_2=l_3=1$ then $k_3=2$, $k_2=6$ and $k_1=9$, then $\alpha=38797$ and $f^{20}(\alpha)=0^910^610^21(38797)=1$.

But these solutions increases exponentially, in fact
If $m_1=m_2=1$ and $m_3=2$, $l_1=l_2=l_3=1$ then $k_3=6$, $k_2=18$ and $k_1=27$, then $\alpha=444799961540067$ and has length $55$

\begin{teo}
Let  $k_i, m_i \in \N$, $1\leq i \leq n$. If $0^{k_1}1^{m_1} \cdots 0^{k_n}1^{m_n}(\alpha)=1$, then there exists $\alpha \in \N$. In particular $$\{\alpha, \textrm{$k_n=2l_n*3^{m_n-1}$, $k_{n-1}=2l_{n-1}*3^{m_{n-1}+m_n-1}, \cdots,$$
$$ \ k_2=2l_2*3^{m_2+ \cdots +m_n-1}$}$$
$$\textrm{and $k_1=(2l_1-1)*3^{m_1+\cdots+m_n-1}$, $l_i, m_i \in \N, 1\leq i \leq n$}\}$$ is a subset of the set of solutions for $\alpha$.
\end{teo}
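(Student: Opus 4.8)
The plan is to prove both assertions by induction on $n$, running the argument already carried out for $n=1,2,3$; the inputs are Theorem~\ref{te k}, the Corollary following it, and the elementary order theory of $2$ modulo powers of $3$.

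First I would put $\alpha$ in a convenient closed form. By Theorem~\ref{te k}, $\alpha$ is the nested expression $(\star)$; clearing denominators (using Lemmas~\ref{K} and~\ref{KI} to organise the computation) one obtains $\alpha = \frac{P_n}{3^{m_1+\cdots+m_n}}-1$, where $P_n\in\N$ is defined by $P_0=2$ and $P_i = 2^{m_i}\pa{2^{k_i}P_{i-1} - 3^{m_1+\cdots+m_{i-1}}\pa{2^{k_i}-1}}$ for $1\le i\le n$; for $n=2,3$ this recovers the numerators appearing in the two preceding theorems. Unrolling the recursion yields
$$P_n = 2^{a}\pa{2^{k_1}+1} - \sum_{i=2}^{n} 2^{b_i}\,3^{m_1+\cdots+m_{i-1}}\pa{2^{k_i}-1},$$
where the exponents $a,b_i$ are certain sums of the $k_j$ and $m_j$ whose precise values do not matter; what matters is that $2^{k_1}+1$ appears in exactly one summand and that the power of $3$ attached to $2^{k_i}-1$ is exactly $3^{m_1+\cdots+m_{i-1}}$. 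Since $\gcd(2,3)=1$, we have $\alpha\in\N$ if and only if $3^{m_1+\cdots+m_n}$ divides $P_n$.

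The core of the argument is to exhibit a large family of solutions by discharging this divisibility \emph{one layer at a time}, exactly as in the proofs of the cases $n=2,3$. I would first impose $2^{k_1}+1\equiv 0 \pmod{3^{m_1+\cdots+m_n}}$, which kills the first summand of $P_n$ modulo $3^{m_1+\cdots+m_n}$; the surviving sum is divisible by its smallest $3$-power $3^{m_1}$, so, dividing through, $\alpha\in\N$ reduces to a congruence modulo $3^{m_2+\cdots+m_n}$ in which the $i=2$ summand has become a power of $2$ times $2^{k_2}-1$. Imposing $2^{k_2}-1\equiv 0 \pmod{3^{m_2+\cdots+m_n}}$ then removes that summand; repeating, at stage $i$ (for $2\le i\le n$) one imposes $2^{k_i}-1\equiv 0 \pmod{3^{m_i+\cdots+m_n}}$, and after the $n$-th stage the sum is empty and the congruence holds trivially. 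Formally this is an induction on the number of summands still present; verifying that the exponents of $3$ line up so that the reductions telescope cleanly --- i.e.\ that after the first $i-1$ impositions the residual sum is divisible by precisely the power of $3$ needed to leave the modulus $3^{m_i+\cdots+m_n}$ with $2^{k_i}-1$ as a factor of the remaining leading term --- is the bookkeeping on which essentially all the real work rests, and it is the step I expect to be the main obstacle.

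Finally I would solve the imposed congruences. Since $\phi(3^{j})=2\cdot 3^{j-1}$ and $2$ is a primitive root modulo every power of $3$ --- and, for the ``$-1$'' case, by the very Euler-theorem computation used in the Corollary following Theorem~\ref{te k} --- one has, for $j\ge 1$, that $2^{k}\equiv 1 \pmod{3^{j}}$ if and only if $k = 2l\cdot 3^{j-1}$, and $2^{k}\equiv -1 \pmod{3^{j}}$ if and only if $k = (2l-1)3^{j-1}$, with $l\in\N$. Taking $j = m_1+\cdots+m_n$ in the second statement and $j = m_i+\cdots+m_n$ in the first for $i = 2,\dots,n$ gives precisely the parametrisation of $k_1,\dots,k_n$ in the theorem, so every such tuple produces $\alpha\in\N$; this proves the ``in particular'' clause. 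The bare existence statement follows by specialising, e.g.\ $m_1=\cdots=m_n=1$ and $l_1=\cdots=l_n=1$, which gives $k_i = 2\cdot 3^{n-i}$ for $2\le i\le n$ and $k_1 = 3^{n-1}$, hence an explicit Collatz number --- as with the value $\alpha=38797$ recorded above for $n=3$.
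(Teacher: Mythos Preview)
The paper states this general theorem without proof, relying on the reader to extrapolate from the explicit proofs given for $n=2$ and $n=3$. Your proposal does exactly that: you set up the same recursion for the numerator (your $P_i$ reproduces the numerators displayed in the $n=2$ and $n=3$ theorems), unroll it into a sum in which $2^{k_1}+1$ and the factors $2^{k_i}-1$ appear with the correct attached powers of $3$, and then peel off the congruence one layer at a time by successively imposing $2^{k_1}+1\equiv 0\pmod{3^{m_1+\cdots+m_n}}$ and $2^{k_i}-1\equiv 0\pmod{3^{m_i+\cdots+m_n}}$ for $i\ge 2$ --- precisely the special case isolated in the paper's $n=2,3$ proofs. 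The telescoping bookkeeping you flag as ``the main obstacle'' is routine once the unrolled form of $P_n$ is written down, and your invocation of the fact that $2$ is a primitive root modulo every $3^{j}$ is the clean way to solve the resulting congruences (the paper's Corollary argues via Euler's theorem, which strictly speaking gives only the necessary direction; your formulation gives both). In short, your argument is correct and is the intended inductive extension of the paper's method.
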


\bibliographystyle{amsalpha}
\par\vspace{.5cm}

\vspace{5mm}

\noindent Juriaans, S.O.,\newline
Instituto de Matemática e Estatística \newline
Universidade de São Paulo \newline
CP 66281 \newline
CEP 05311-970 \newline
São Paulo - Brazil\newline
ostanley@usp.br

\vspace{5mm}

\noindent De A. E Silva, A., \newline
Departamento de Matem\'atica \newline
Universidade Federal da Para\'iba \newline
CEP 58051-900, \newline
Jo\~ao Pessoa, Pb, Brazil, \newline
andrade@mat.ufpb.br

\vspace{5mm}

\noindent Souza Filho, A. C., \newline
Escola de Artes, Ciências e Humanidade \newline
Universidade de São Paulo \newline
Rua Arlindo Béttio, 1000 \newline
CEP 03828-000 \newline
São Paulo - Brazil \newline
acsouzafilho@usp.br

\end{document}